  \numberwithin{equation}{section}
  \newtheorem{theorem}{Theorem}[section]
  \newtheorem{lemma}[theorem]{Lemma}
  \newtheorem{proposition}[theorem]{Proposition}
  \newtheorem{corollary}[theorem]{Corollary}
  \theoremstyle{remark}
\DeclareSymbolFont{bbold}{U}{bbold}{m}{n}
\DeclareSymbolFontAlphabet{\mathbbold}{bbold}
\begin{document}

\title[Hausdorff dimensions of the recurrent sets]{Zero-one law of Hausdorff dimensions of the recurrent sets} 

\author{Dong Han Kim}

\address{$^1$Department of Mathematics Education, Dongguk University-Seoul, Seoul, 100-715 Korea}

\email{kim2010@dongguk.edu}

\thanks{
$^*$ Corresponding author.\\
This work was supported by the National Research Foundation of Korea(NRF) (2012R1A1A2004473), and by NSFC (no. 11201155 and 11371148).}

\author{Bing Li$^*$}

\address{$^2$Department of Mathematics, South China University of Technology,
Guangzhou, 510641, P.R. China}

\email{scbingli@scut.edu.cn}

\begin{abstract}
Let $(\Sigma, \sigma)$ be the one-sided shift space with $m$ symbols and $R_n(x)$ be the first return time of $x\in\Sigma$ to the $n$-th cylinder containing $x$.
 Denote $$E^\varphi_{\alpha,\beta}=\left\{x\in\Sigma: \liminf_{n\to\infty}\frac{\log R_n(x)}{\varphi(n)}=\alpha,\ \limsup_{n\to\infty}\frac{\log R_n(x)}{\varphi(n)}=\beta\right\},$$
where $\varphi: \mathbb{N}\to \mathbb{R}^+$ is a monotonically increasing function and $0\leq\alpha\leq\beta\leq +\infty$.
We show that the Hausdorff dimension of the set $E^\varphi_{\alpha,\beta}$ admits a dichotomy: it is either zero or one depending on $\varphi, \alpha$ and $\beta$.
\end{abstract}

\maketitle

\noindent{\small{\bf Key Words}: First return time, Hausdorff dimension.}

\noindent{\small{\bf AMS Subject Classification (2010)}: 28A80}

\section{Introduction}
Let $m\geq 2$ be an integer and $(\Sigma, \sigma)$ be one-sided shift space with $m$ symbols, more precisely, $\Sigma=\{0,1,\cdots,m-1\}^\mathbb{N}$ and $\sigma(x)=(x_{i+1})_{i=1}^\infty$ for $x=(x_i)_{i=1}^\infty\in\Sigma$. A usual metric $d$ on $\Sigma$ is given as
$$d(x,y)=m^{-\inf\{k\geq 0: x_{k+1}\neq y_{k+1}\}}$$
for $x=(x_i)_{i=1}^\infty, y=(y_i)_{i=1}^\infty\in\Sigma.$
For $n\ge 1$ and $x\in\Sigma$, define the first return time of $x$ to the initial word of length $n$ as
\begin{align*}
R_n (x) &=\inf\{j \geq 1 : x_{j+1}x_{j+2}\cdots x_{j+n}=x_1x_2\cdots x_n\},\\
R'_n (x) &=\inf\{j \geq n: x_{j+1}x_{j+2}\cdots x_{j+n}=x_1x_2\cdots x_n\}.
\end{align*}
That is, $$R_n(x)=\inf\{j \geq 1: \sigma^j(x)|_n=x|_n\}=\inf\{j \geq 1: d(\sigma^j(x), x) \le m^{-n}\},$$ where $x|_n$ is the prefix of $x$ with the length $n$.
Both $R_n$ and $R'_n$ are the same except for the case of very short return time.
However, $R'_n$ is slightly easier to investigate.

Let $\nu$ be any $\sigma$-invariant ergodic Borel probability measure on $\Sigma$.
Ornstein and Weiss \cite{OW} proved that for $\nu$-almost all $x\in\Sigma$,
$$\lim_{n\to\infty}\frac{\log R'_n(x)}{n}=h_\nu(\sigma),$$
where $h_\nu(\sigma)$ denotes the measure-theoretic entropy of $\nu$ with respect to $\sigma$ (see also \cite{GKP06}).

The topic of the first return time of a point in a dynamical system is originated in the famous Poincar\'{e} recurrence theorem, (see \cite[p. 61]{Fu})     
which states that $\mu$-almost all $x\in X$ is recurrent in the sense  
\begin{equation}\label{Poincare}
\liminf_{n\to\infty}d(T^nx,x)=0,
\end{equation}
where $(X, \mathcal{B},\mu,T,d)$ is a metric measure-preserving dynamical system, by which we mean that $(X, d)$ is a metric space and has a countable base, $\mathcal{B}$ is a sigma-field containing the Borel sigma-field of $X$ and $(X, \mathcal{B},\mu, T)$ is a measure-preserving dynamical system.
Boshernitzan \cite{Bo} improved \eqref{Poincare} by a quantitative version
$$\liminf_{n\to\infty}n^{1/\alpha}d(T^nx,x)<\infty$$
for $\mu$-almost every (a.e.) $x\in X$, where $\alpha$ is the dimension of the space in some sense.
Let
$$\tau_r(x)=\inf\{n\geq 1: d(T^nx, x)<r\}$$
and define the lower and upper recurrence rates of $x\in X$ as
$$\underline{R}(x)=\liminf_{r\to 0}R_r(x),\ \ \overline{R}(x)=\limsup_{r\to 0}R_r(x),$$
where $R_r(x)=\frac{\log\tau_r(x)}{-\log r}$. Barreira and Saussol \cite{BS1} proved that 
\begin{equation}\label{upperlowerrate}
\underline{R}(x)=\underline{d}_\mu(x),\ \ \overline{R}(x)=\overline{d}_\mu(x), \ \ \ \mu-\text{a.e.}
\end{equation}
with the condition that $\mu$ has a so-called long return time (see \cite{BS1}) and $\underline{d}_\mu(x)>0$ for $\mu$-a.e. $x$, where $\underline{d}_\mu(x)$, $\overline{d}_\mu(x)$ are the lower and upper pointwise dimensions of $\mu$ at $x\in X$ respectively.  
As a consequence of \eqref{upperlowerrate}, if $\alpha>\underline{d}_\mu(x)$, then
$$\liminf_{n\to\infty}n^{1/\alpha}d(T^nx, x)=0,$$
which is a reformulation of Boshernitzan's result. Some other sufficient conditions of  \eqref{upperlowerrate} were given in the literatures (for instance, \cite{Rousseau12, RousseauSaussol10, Saussol06}). 
The distribution of the first hitting time of the dynamical system is also  considered (see \cite{Galatolo07, GPM10}).

It is shown by Tan and Wang \cite[Theorem 1.3]{TW11} (see also \cite{HV95}) that   
for a positive function $\psi$ defined on $\mathbb{N}$ we have
\begin{equation}\label{dimTW11}
\dim_{\rm H}\left\{x\in\Sigma: d(\sigma^n(x),x)<\psi(n)\ \ \text{infinitely often}\right\}=\frac{1}{1+b},
\end{equation}
where 
$$b= \liminf\limits_{n\to\infty}\frac{-\log_m  \min\left\{ \psi(n), 1 \right\} }{n}.$$   

In 2001, Feng and Wu \cite{FW} studied the exceptional sets
$$\left\{x\in\Sigma: \liminf_{n\to\infty}\frac{\log R'_n(x)}{n}=\alpha,\ \limsup_{n\to\infty}\frac{\log R'_n(x)}{n}=\beta\right\}$$
with $0\le\alpha\le\beta\le\infty$ and proved that those sets are always of Hausdorff dimension one no matter what $\alpha$ and $\beta$ are (see also \cite{CWY15, PTW12, SW}). 
Lau and Shu \cite{LS} extended this result to the dynamical systems with specification property by considering the topological entropy instead of Hausdorff dimension.  
Olsen \cite{Olsen} studied  the set of the points for which the set of accumulation points of
$\frac{\log R'_n(x)}{n}$ is a given interval for the self-conformal system satisfying a certain separation condition  (see also \cite{Olsen04}).  
He proved such set is of full Hausdorff dimension which can be applied to the case of the $N$-adic transformation with $N\in\mathbb{N}$. 
Ban and Li \cite{BanLi2014} generalised this result to $\beta$-transformation for any $\beta>1$ including the cases of full shifts, subshift of finite type, specification,  synchronizing etc. 

In \cite{FW} and \cite{OW}, the authors considered the recurrence time $R'_n(x)$ with the exponential rate.
What will happen if we replace the exponential rate with the polynomial rate?
Denote $$E'_{\alpha,\beta}=\left\{x\in\Sigma: \liminf_{n\to\infty}\frac{\log R'_n(x)}{\log n}=\alpha,\ \limsup_{n\to\infty}\frac{\log R'_n(x)}{\log n}=\beta\right\}. $$
Peng \cite{Peng} proved that $\dim_{\rm H}E'_{\alpha,\beta}=1$ for any $1\leq\alpha\leq \beta\leq +\infty$, where $\dim_{\rm H}$ means the Hausdorff dimension of some set.

However, for the short return time case it is worth to distinguish $R_n$ and $R'_n$.
Let $0\le\alpha\le\beta\le\infty$ and denote $$E_{\alpha,\beta}=\left\{x\in\Sigma: \liminf_{n\to\infty}\frac{\log R_n(x)}{\log n}=\alpha,\ \limsup_{n\to\infty}\frac{\log R_n(x)}{\log n}=\beta\right\}. $$
Once the condition $\alpha\geq 1$ fails for $R_n$, 
we have $\dim_{\rm H}E_{\alpha,\beta}=0$ (see Corollary \ref{lograte}), which is the complement of the result in \cite{Peng}.
This result indicates that the Hausdorff dimensions of the recurrence sets with the polynomial rates may drop from the full dimension which is different with that in \cite{FW}. Furthermore, such dimensions are either zero or one.

Now we consider the case of general rate. More precisely, let $\varphi: \mathbb{N}\to\mathbb{R}^+$ be a monotonically increasing function.
 Denote $$E^\varphi_{\alpha,\beta}=\left\{x\in\Sigma: \liminf_{n\to\infty}\frac{\log R_n(x)}{\varphi(n)}=\alpha,\ \limsup_{n\to\infty}\frac{\log R_n(x)}{\varphi(n)}=\beta\right\}.$$
We completely calculate the Hausdorff dimensions of the sets $E^\varphi_{\alpha,\beta}$ by the following theorem, which is a generalization and complement of \cite{FW} and \cite{Peng}.

Write
$$\liminf_{n\to\infty}\frac{\varphi(n)}{\log n}=\delta\ \ \text{and}\ \ \limsup_{n\to\infty}\frac{\varphi(n)}{\log n}=\gamma.$$
\begin{theorem}\label{generalrate}
For any $0 \le \alpha \le \beta \le \infty$, we have
$$
\dim_{\rm H} (E^\varphi_{\alpha,\beta}) = \begin{dcases}
1, &\text{ if } \ \alpha\geq\frac{1}{\gamma} \text{ and }
\beta\geq\frac{1}{\delta}, \\
0, &\text{ otherwise}, \end{dcases}
$$
with the convention $\frac{1}{0}=\infty$ and $\frac{1}{\infty}=0$.
\end{theorem}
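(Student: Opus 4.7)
The plan is to establish the dichotomy by handling the two cases with entirely different methods: a covering-and-counting argument for the dim-zero case, and an explicit Cantor-type construction for the dim-one case.

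For the dim-zero case ($\alpha<1/\gamma$ or $\beta<1/\delta$), the decisive combinatorial fact is that $R_n(x)\le r$ with $r<n$ forces $x_1\cdots x_n$ to be periodic of period at most $r$, so the set $F_{n,r}:=\{y:R_n(y)\le r\}$ is covered by at most $2m^r$ cylinders of diameter $m^{-n}$. When $\beta<1/\delta$, I choose a \emph{deterministic} subsequence $n_k$ with $\varphi(n_k)/\log n_k\to\delta$, fix $\epsilon>0$ with $(\beta+\epsilon)(\delta+\epsilon)<1-\epsilon'$, and use the global bound $\log R_n(x)\le(\beta+\epsilon)\varphi(n)$ (valid for every $x\in E^\varphi_{\alpha,\beta}$ and all large $n$) to conclude $R_{n_k}(x)\le n_k^{1-\epsilon'}$ for all large $k$. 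When $\alpha<1/\gamma$, the subsequence becomes $x$-dependent: the $\liminf$ condition yields indices $m_j(x)$ with $\log R_{m_j}(x)<(\alpha+\epsilon)\varphi(m_j)$, while the global bound $\varphi(m_j)\le(\gamma+\epsilon)\log m_j$ again forces $R_{m_j}(x)\le m_j^{1-\epsilon'}$. In either case $E^\varphi_{\alpha,\beta}\subseteq\bigcap_{N}\bigcup_{n\ge N}F_{n,n^{1-\epsilon'}}$, and for any $s>0$ the cover sum $\sum_{n\ge N}2m^{n^{1-\epsilon'}-sn}\to 0$ as $N\to\infty$, proving $\dim_{\rm H}E^\varphi_{\alpha,\beta}=0$.

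For the dim-one case ($\alpha\ge 1/\gamma$ and $\beta\ge 1/\delta$), I will construct an explicit Cantor subset of $E^\varphi_{\alpha,\beta}$ of dimension arbitrarily close to $1$. Pick rapidly increasing alternating sequences $a_1<b_1<a_2<b_2<\cdots$ with $\varphi(a_j)/\log a_j\to\gamma$ and $\varphi(b_j)/\log b_j\to\delta$, each epoch so long that the prescription at scale $a_j$ (resp.\ $b_j$) dominates the ratio $\log R_n/\varphi(n)$ throughout the following plateau. On the $a_j$-epoch I plant a block realizing $R_{a_j}(x)\approx\lceil e^{\alpha\varphi(a_j)}\rceil$; this value is at least $a_j$ because $\alpha\gamma\ge 1$, so it is a genuinely admissible return time and can be enforced by an appropriate periodic pattern together with spacer symbols that forbid any smaller period. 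At scale $b_j$ I similarly engineer $R_{b_j}(x)\approx e^{\beta\varphi(b_j)}$. All remaining coordinates are left free over $\{0,\ldots,m-1\}$. A Bernoulli-type measure on the resulting Cantor set then admits the mass-distribution estimate: since the constrained coordinates form an $o(n)$ fraction of the first $n$ positions, each $n$-cylinder carries measure $m^{-(1-o(1))n}$, giving a local dimension of $1-o(1)$ and hence $\dim_{\rm H}E^\varphi_{\alpha,\beta}\ge 1$.

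The main obstacle will be this second construction, which must reconcile three competing requirements at once: (i) the prescribed returns $R_{a_j}$ and $R_{b_j}$ should be realized \emph{exactly}, requiring carefully placed spacers to forbid both shorter periods (which would shrink $R_{a_j}$) and spurious long-range matches (which would displace the first return); (ii) at intermediate scales $n$ between consecutive prescribed epochs, the ratio $\log R_n(x)/\varphi(n)$ must remain inside $[\alpha-o(1),\beta+o(1)]$ so that neither the $\liminf$ nor the $\limsup$ is spoiled; and (iii) the constrained positions must occupy a vanishing fraction of each prefix so that the mass-distribution computation yields a dimension close to $1$. Edge cases such as $\alpha=0$, $\beta=\infty$, $\gamma=\infty$ or $\delta=0$ are absorbed by the stated conventions $1/0=\infty$ and $1/\infty=0$, and can be cross-checked against the polynomial-rate theorem of \cite{Peng} and the exponential-rate theorem of \cite{FW}.
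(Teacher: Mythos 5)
Your zero-dimensional half is sound and is essentially a self-contained re-proof of the special case of the Tan--Wang bound \eqref{dimTW11} that the paper invokes: both arguments reduce to the observation that $\alpha<1/\gamma$ or $\beta<1/\delta$ forces $\liminf_n \log R_n(x)/\log n<1$, and your count of at most $2m^r$ periodic $n$-words covering $\{y:R_n(y)\le r\}$ is correct. The edge cases with $\gamma=0$ or $\delta=0$ go through as you indicate, since $\alpha<1/\gamma$ (resp.\ $\beta<1/\delta$) already forces $\alpha,\gamma$ (resp.\ $\beta,\delta$) finite.

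The one-dimensional half, however, has a genuine gap exactly at the point you flag as ``the main obstacle,'' and that obstacle is the bulk of the actual proof, not a detail. Prescribing the return time only at two sparse interleaved sequences $a_j<b_j$ and leaving ``all remaining coordinates free'' cannot work when $\beta<\infty$. Since $n\mapsto R_n(x)$ is nondecreasing, the value of $R_n(x)$ for intermediate $n\in(a_j,b_j]$ is whatever the construction happens to produce there: if the planted copy of the prefix at time $\approx e^{\beta\varphi(b_j)}$ is the first return for all such $n$, then for $n$ just above $a_j$ one gets $\log R_n(x)/\varphi(n)\approx \beta\varphi(b_j)/\varphi(a_j)$, which tends to infinity precisely because your epochs are ``rapidly increasing''; if instead the free Bernoulli coordinates create an earlier match, it occurs at a typical time $\approx m^n$, giving ratio $\approx n\log m/\varphi(n)$, which also diverges whenever $\varphi(n)=o(n)$. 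Either way the $\limsup$ exceeds $\beta$. The repair requires prescribing $R_n(x)=\ell_{i+1}$ for \emph{every} $n$ in $(n_i,n_{i+1}]$ along a \emph{dense} hierarchy of scales, chosen so that $\log\ell_{i+1}/\varphi(n)$ stays in $[\alpha-o(1),\beta+o(1)]$ across each whole block --- e.g.\ by taking $\varphi(n_{i+1})/\varphi(n_i+1)\to 1$, and, in the case $\beta\delta<\alpha\gamma$ with $\delta<\gamma<\infty$, by interpolating the exponent of $\ell_i$ with an affine function $\rho$ of $\varphi(n_i)/\log n_i$ so that the $\liminf$ is attained along the subsequence where $\varphi/\log\to\gamma$ and the $\limsup$ along the one where $\varphi/\log\to\delta$. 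One must also verify that such scale sequences exist (the two extremal subsequences must be interleaved with controlled multiplicative gaps in $\log n_i$, which is a separate construction) and that the marker blocks occupy an $o(1)$ fraction of each prefix, which forces $\ell_i\gg i\,n_i$ and is where $\alpha\gamma\ge1$, $\beta\delta\ge1$ actually enter. Your proposal correctly identifies all three competing requirements but resolves none of them, so as written it establishes the lower bound only in the easiest regimes (e.g.\ $\alpha=\beta=\infty$).
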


Application of Theorem~\ref{generalrate} to $\varphi(n)=\log n$ implies the following.
\begin{corollary}\label{lograte}
Let $0\le\alpha\le\beta\le\infty$. Then
\begin{equation*}
\dim_{\rm H}E_{\alpha,\beta}=
\begin{cases}
1& \text{if}\ \ \alpha\ge 1,\\
0& \text{if}\ \ \alpha<1.
\end{cases}
\end{equation*}
\end{corollary}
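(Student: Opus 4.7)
The plan is simply to apply Theorem~\ref{generalrate} with the choice $\varphi(n)=\log n$ and translate its conclusion. This is genuinely a direct specialization, so the proposal is short.

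First I would compute the two exponents $\delta$ and $\gamma$ that appear in Theorem~\ref{generalrate} for this particular $\varphi$. With $\varphi(n)=\log n$ we get
\[
\delta=\liminf_{n\to\infty}\frac{\log n}{\log n}=1,\qquad \gamma=\limsup_{n\to\infty}\frac{\log n}{\log n}=1,
\]
so the threshold values become $1/\gamma=1$ and $1/\delta=1$. Also observe that $E_{\alpha,\beta}=E^{\varphi}_{\alpha,\beta}$ by the very definition of $E_{\alpha,\beta}$, so Theorem~\ref{generalrate} applies verbatim to the set in the corollary.

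Next I would unpack the conclusion of the theorem under these values. It says $\dim_{\rm H}E_{\alpha,\beta}=1$ precisely when $\alpha\ge 1$ \emph{and} $\beta\ge 1$, and $0$ otherwise. Since $0\le\alpha\le\beta$, the inequality $\alpha\ge 1$ already forces $\beta\ge 1$, so the pair of conditions collapses to the single condition $\alpha\ge 1$. Conversely, if $\alpha<1$ the second alternative of the theorem applies and the dimension is $0$. This yields exactly the dichotomy stated in the corollary.

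The only thing one has to be a bit careful about is the endpoint/convention behavior: for instance the case $\beta=\infty$ is allowed and still produces $\beta\ge 1/\delta=1$, and the case $\alpha=\infty$ (which forces $\beta=\infty$) is likewise covered. With the conventions $\tfrac{1}{0}=\infty$, $\tfrac{1}{\infty}=0$ already fixed in the theorem, these boundary cases require no additional argument. There is no real obstacle here; the whole point of stating Theorem~\ref{generalrate} in terms of $\delta$ and $\gamma$ is to make the polynomial-rate corollary a one-line consequence.
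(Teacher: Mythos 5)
Your proposal is correct and is exactly the paper's own argument: the paper derives Corollary~\ref{lograte} precisely by specializing Theorem~\ref{generalrate} to $\varphi(n)=\log n$, where $\delta=\gamma=1$ and the two conditions $\alpha\ge 1/\gamma$, $\beta\ge 1/\delta$ collapse to $\alpha\ge 1$ because $\beta\ge\alpha$. Nothing further is needed.
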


From the results in \cite{FW,Peng} and Theorem \ref{lograte}, when $\varphi(n)=n$ or $\varphi(n)=\log n$, the values of $\dim_{\rm H}(E^\varphi_{\alpha,\beta})$ just depend on $\alpha$ ($\beta\geq\alpha$ is a natural condition since otherwise, the set $(E^\varphi_{\alpha,\beta})$ will be empty). 
The new phenomenon arises for general $\varphi$ from Theorem \ref{generalrate}, that is, the values of $\dim_{\rm H}(E^\varphi_{\alpha,\beta})$ may depend on $\beta$ as well.




\section{Proof of the main result}

Firstly we prove the zero-dimensional part of Theorem \ref{generalrate} by the following proposition, which is
a consequence of \eqref{dimTW11}.
\begin{proposition}\label{twodimensions}
Let $\alpha <1$. Then
\begin{equation}\label{lessthanalpha}
\dim_{\rm H}\left\{x\in\Sigma: \liminf_{n\to\infty}\frac{\log R_n(x)}{\log n}\leq\alpha\right\}=0.
\end{equation}
\end{proposition}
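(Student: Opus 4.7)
The plan is to apply the Tan--Wang formula \eqref{dimTW11} to a single well-chosen gauge function $\psi$, engineered so that the associated $b$ is infinite and the formula yields dimension zero. Since $\alpha<1$, I pick any $\alpha'\in(\alpha,1)$ and set $\psi(j):=m^{-j^{1/\alpha'}}$.

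The main step is the inclusion
$$\left\{x\in\Sigma:\liminf_{n\to\infty}\frac{\log R_n(x)}{\log n}\le\alpha\right\}\ \subseteq\ P\ \cup\ \bigl\{x\in\Sigma:d(\sigma^jx,x)<\psi(j)\text{ infinitely often}\bigr\},$$
where $P$ denotes the (countable, hence zero-dimensional) set of periodic points of $\sigma$. To see this, let $x$ lie in the left-hand set. Since $\alpha<\alpha'$, the $\liminf$ hypothesis supplies infinitely many $n$ with $R_n(x)<n^{\alpha'}$. Writing $j:=R_n(x)$ for each such $n$, one has $j^{1/\alpha'}<n$, and the defining property of $R_n$ yields $d(\sigma^jx,x)\le m^{-n}<m^{-j^{1/\alpha'}}=\psi(j)$. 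The sequence $R_n(x)$ is non-decreasing in $n$: if it is bounded, then the constant value $j_0$ it eventually takes satisfies $\sigma^{j_0}(x)|_n=x|_n$ for all large $n$, forcing $\sigma^{j_0}x=x$ and so $x\in P$; otherwise $R_n(x)\to\infty$, so the $j$'s just produced are unboundedly many distinct integers, placing $x$ in the second set on the right.

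To finish, I apply \eqref{dimTW11} with this choice of $\psi$. Since $\psi(j)\le m^{-1}<1$, we have $\min\{\psi(j),1\}=\psi(j)$, so
$$b\ =\ \liminf_{j\to\infty}\frac{-\log_m\psi(j)}{j}\ =\ \liminf_{j\to\infty}j^{\,1/\alpha'-1}\ =\ +\infty,$$
because $1/\alpha'>1$. Hence the second set on the right has Hausdorff dimension $1/(1+b)=0$, and combined with $\dim_{\rm H}P=0$ this yields \eqref{lessthanalpha}.

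I do not anticipate a serious obstacle. The only point that requires mild care is the bookkeeping in the inclusion step, namely the dichotomy that $R_n(x)$ either stays bounded (forcing $x$ to be periodic, thus contributing only a countable, dimension-zero set) or tends to infinity (which is needed in order to convert infinitely many values of $n$ into infinitely many distinct return indices $j$). Once this is observed, everything reduces to a direct application of the earlier result \eqref{dimTW11}.
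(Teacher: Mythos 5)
Your proposal is correct and follows essentially the same route as the paper: both reduce the set to a shrinking-target set of the form $\{x : d(\sigma^j x,x)<m^{-j^{1/\alpha'}} \text{ i.o.}\}$ and invoke \eqref{dimTW11} with $b=\infty$. Your extra care in converting ``infinitely many $n$'' into infinitely many \emph{distinct} return indices $j=R_n(x)$ (via the dichotomy bounded $R_n$ $\Rightarrow$ periodic point) is a point the paper glosses over; note though that periodic points already satisfy $d(\sigma^{kj_0}x,x)=0<\psi(kj_0)$ for all $k$, so they lie in the shrinking-target set anyway and the separate set $P$ is not strictly needed.
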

\begin{proof}
For any $0<\varepsilon<1-\alpha$, the condition 
$$\liminf\limits_{n\to\infty}\frac{\log R_n(x)}{\log n}\leq \alpha$$
implies there exist infinitely many $n$'s such that $R_n(x)<n^{\alpha+\varepsilon}$, that is, $n>R_n(x)^{1/(\alpha+\varepsilon)}$.
By the definition of $R_n(x)$, we obtain $d(\sigma^{R_n(x)} (x), x)<m^{-n}<m^{-R_n(x)^{1/(\alpha+\varepsilon)}}$.
Therefore the desired sets in \eqref{lessthanalpha}  are the subsets of the set
 $\left\{x\in\Sigma: d(\sigma^n(x),x)<m^{-n^{1/(\alpha+\varepsilon)}}\ \text{infinitely often}\right\}.$
We complete the proof by the application of \eqref{dimTW11}.
\end{proof}

\begin{proof}[Proof of the zero-dimensional part of Theorem \ref{generalrate}]
Suppose that $\alpha \gamma <1$  with $\alpha<\infty$ and $\gamma <\infty$ or $\beta \delta < 1$ with $\beta < \infty$ and $\delta <\infty$.
Since
\begin{align*}
\liminf_{n\to\infty}\frac{\log R_n(x)}{\log n} 
&\le \left( \liminf_{n\to\infty}\frac{\log R_n(x)}{\varphi(n)}\right) \cdot \left( \limsup_{n\to\infty} \frac{\varphi(n)}{\log n}\right) = \alpha \gamma, \\
\liminf_{n \to \infty}\frac{\log R_{n}(x)}{\log n} &\le 
\left( \limsup_{n \to \infty}\frac{\log R_{n}(x)}{\varphi(n)} \right) \cdot
\left( \liminf_{n \to \infty}\frac{\varphi(n)}{\log n} \right) = \beta \delta,
\end{align*}
we have
$$ \liminf_{n\to\infty}\frac{\log R_n(x)}{\log n} < 1.$$
By Proposition \ref{twodimensions},  such point set is zero dimensional, which implies that
\begin{equation*}
\dim_{\rm H} (E^\varphi_{\alpha,\beta}) = 0. \qedhere
\end{equation*}

\end{proof}

Now we  concentrate on the proof of the one-dimensional part of Theorem \ref{generalrate}. The idea is to construct a subset of $E_{\alpha, \beta}^\varphi$ with full Hausdorff dimension for different cases of $\varphi, \alpha, \beta$.  The following technical lemma provides such subsets with dimension one, which is a generalization of Lemma 1 in \cite{FW} and the ideas of proofs are same.

\begin{lemma}\label{Cantorsetlemma}
Let $\{n_i\}_{i\geq 1}$ and $\{\ell_i\}_{i\geq 1}$ be two strictly increasing sequences of natural numbers satisfying the following conditions:\\
(i) $\ell_{i+1}\geq \ell_i+n_i+3$;\\
(ii) $\lim\limits_{i\to\infty}\frac{i(n_i+3)}{\ell_i}=0$.\\
 Then the set
$$A(\{n_i\}, \{ \ell_i \}):= \left\{x\in\Sigma: R_n(x)=\ell_{i+1} \ \text{for all large $n$ with } n_i< n\le  n_{i+1}\right\}  
$$
is of full Hausdorff dimension.
\end{lemma}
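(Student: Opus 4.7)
The plan is to explicitly construct a Cantor-like subset $C \subseteq A(\{n_i\},\{\ell_i\})$ of full Hausdorff dimension and conclude by the mass distribution principle. For each sufficiently large index $i$ I partition the positions $[\ell_i+n_i+1,\ell_{i+1}+n_{i+1}]$ into three types: a \emph{forced} block $[\ell_{i+1}+1,\ell_{i+1}+n_{i+1}]$, whose letters are prescribed to equal $x_1\cdots x_{n_{i+1}}$; a short \emph{marker} block of two or three positions immediately following each forced block, whose symbols are prescribed so as to block spurious early matches of $x_1\cdots x_n$; and a \emph{free} block in which every letter may be chosen arbitrarily from $\{0,\ldots,m-1\}$. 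Letting the first few positions range over all of $\Sigma$ then defines $C$.

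First I would verify that every $x \in C$ satisfies $R_n(x) = \ell_{i+1}$ for all $n$ with $n_i < n \le n_{i+1}$ and all sufficiently large $i$. The forced block immediately gives the inequality $R_n(x) \le \ell_{i+1}$, so the real content is to rule out $R_n(x) < \ell_{i+1}$. This is where hypothesis~(i), namely $\ell_{i+1} \ge \ell_i+n_i+3$, enters: the three-symbol buffer between consecutive forced blocks lets me insert short mismatches, for example by arranging $x_{\ell_j+n_j+1} \ne x_{n_j+1}$ for each relevant $j$, so that no translate of $x_1\cdots x_n$ can sit inside an earlier interval $[\ell_j+1,\ell_j+n_j+n]$. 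This inductive combinatorial verification is the main technical obstacle; once past it, the rest is routine and runs along the same lines as Lemma~1 of \cite{FW}.

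Next I would place on $C$ the natural Bernoulli-type probability measure $\mu$ assigning weight $1/m$ to each symbol at every free position (and mass $1$ to the prescribed value at every forced or marker position). At level $N=\ell_{i+1}+n_{i+1}$ each allowed cylinder has $\mu$-mass exactly $m^{-F_N}$, where $F_N$ denotes the number of free positions in $[1,N]$. All non-free positions lie inside $\bigcup_{j\le i+1}[\ell_j+1,\ell_j+n_j+3]$, which has cardinality at most $(i+1)(n_{i+1}+3)$, so hypothesis~(ii) forces $F_N/N \to 1$ as $i \to \infty$.

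Finally, fix $s\in(0,1)$. By the previous step, for all large $i$ any level-$N$ allowed cylinder has diameter $m^{-N}$ and $\mu$-mass at most $m^{-sN}$, and since an arbitrary ball of radius $r$ meets only a bounded number of cylinders of comparable diameter, $\mu(B(x,r)) \le C\, r^s$ for all small $r$. The mass distribution principle then gives $\dim_{\rm H} C \ge s$; letting $s \to 1^-$ yields $\dim_{\rm H} A(\{n_i\},\{\ell_i\}) \ge \dim_{\rm H} C = 1$, which is the desired full-dimension conclusion.
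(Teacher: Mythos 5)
There is a genuine gap, and it sits exactly at the point you flag as ``the main technical obstacle'': with the construction as described, the set $C$ is \emph{not} contained in $A(\{n_i\},\{\ell_i\})$. Your forced blocks give $R_n(x)\le \ell_{i+1}$, and your marker symbols (e.g.\ $x_{\ell_j+n_j+1}\ne x_{n_j+1}$) only rule out occurrences of $x_1\cdots x_n$ that are \emph{anchored at a forced block}, i.e.\ matches starting at position $\ell_j+1$. They do nothing to prevent $x_1\cdots x_n$ from reappearing at an arbitrary position inside a free block. Since both the prefix $x_1\cdots x_{n}$ and the free blocks range over all of $\{0,\dots,m-1\}$, a positive proportion of the points of $C$ will have such a spurious early occurrence, giving $R_n(x)<\ell_{i+1}$; so the inclusion $C\subseteq A$ fails and the mass distribution argument computes the dimension of the wrong set. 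Repairing this requires a structural restriction that decouples the prefix from the rest of the sequence, and that restriction necessarily costs dimension, which must then be recovered by a limiting argument.

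This is precisely what the paper's proof supplies. It works inside the set $F_p$ of sequences beginning with $0^p$ and having $1$'s at positions $pk+1$ and $pk+p$, so that the word $0^p$ never recurs; the prefix $x|_n$ (which starts with $0^p$) can then only reappear inside the inserted words $w_j(x)=1(x|_{n_j})\overline{x_{n_j+1}}1$, where the flipped symbol kills the match. The price is $\dim_{\rm H}F_p=\frac{p-2}{p}<1$, recovered by letting $p\to\infty$ after a near-Lipschitz distortion estimate (which plays the role of your measure computation and uses condition (ii) in the same way). A secondary, more minor issue: for the mass distribution principle you must bound $\mu$ of cylinders at \emph{every} level $k$, not only at the special levels $N=\ell_{i+1}+n_{i+1}$; inside a forced block the cylinder measure does not decay, so you need $F_k/k\to 1$ uniformly in $k$, which does follow from (ii) but has to be checked for $k$ in the middle of a forced block as well.
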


\begin{proof}
Let $p>2$ be a natural number. Define
$$F_p=\left\{x\in\Sigma: x_j=0\ \text{for}\ 1\le j\le p, x_{pk+1}=x_{pk+p}=1\ \text{for}\ k\ge 1\right\}.$$
By the dimensional formula of the self-similar set (see \cite[p.130]{Fal90}, 
see also \cite{FW}), we know that
$$\dim_{\rm H}F_p=\frac{p-2}{p}.$$

We will construct a map $g: F_p\to A(\{n_i\}, \{ \ell_i \})$ in the following and show that $g^{-1}$ is nearly Lipschitz on $g(F_p)$,      
that is, for any $\varepsilon>0$, there exists $M>0$ such that $d(g(x),g(y))<2^{-k}$ implies $d(x,y)<2^{-(1-\varepsilon)k}$ for all $k\ge M$.

Let $x\in F_p$ and $x^{(0)}=x$.
 Find the minimal $k_0$  such that $\ell_{k_0}-1>p$ and let $x^{(1)}=x^{(2)}=\cdots=x^{(k_0-1)}=x^{(0)}$.
Assume that the sequence $x^{(k-1)}=(x_i^{(k-1)})$ has been defined    
($k\geq k_0$) and now we define $x^{(k)}$. Put the word $w_{k}(x)=1(x^{(k-1)}|_{n_{k}})\overline{x^{(k-1)}_{n_{k}+1}}1$, where $x^{(k-1)}|_{n_k}$ means the prefix word of $x^{(k-1)}$ with length $n_k$ and $\overline{x^{(k-1)}_{n_k+1}}=x^{(k-1)}_{n_k+1} +1 \pmod m.$ 
 Then $x^{(k)}$ is given by inserting $w_k(x)$ in $x^{(k-1)}$ at the place $\ell_{k}$, i.e.,
$$x^{(k)}=x_1^{(k-1)}x_2^{(k-1)}\cdots x_{\ell_k-1}^{(k-1)}w_k(x)x_{\ell_k}^{(k-1)}x_{\ell_k+1}^{(k-1)}\cdots.$$
Since $\{\ell_k\}$ is increasing to infinity and $x^{(k)}|_{\ell_k-1}=x^{(k-1)}|_{\ell_k-1}$, the limit of $\{x^{(k)}\}$ exists, denoted by $x^*$. 
Indeed, 
$x^*$ is obtained by inserting the sequence of words $\{w_k(x)\}$ in $x=x^{(0)}_1x^{(0)}_2\cdots x^{(0)}_n\cdots.$

Now we prove $x^*\in A(\{n_i\}, \{ \ell_i \})$.  
In fact, let $n_i<n\le n_{i+1}$ for some $i\geq 1$ with $n_i>p$. 
Since $x^*|_{\ell_{i+1}+n_{i+1}+3}=x^{(i+1)}|_{\ell_{i+1}+n_{i+1}+3}$ (by condition (i)), by the construction of $x^{(i+1)}$, we know that $R_n(x^*)=R_n(x^{(i+1)})\leq \ell_{i+1}$. 
It remains to show that $R_n(x^{(i+1)})< \ell_{i+1}$ can not happen, that is, $x^{(i+1)}|_n$  cannot reappear in $x^{(i+1)}|_{\ell_{i+1}+n-1}=(x^{(i)}|_{\ell_{i+1}-1})1(x^{(i)}|_{n-1})$. 
In fact, since $x^{(i+1)}|_n$ begins with $p$ consecutive zeros which does not appear in $x$ except at the beginning by the structure of $x\in F_p$, the only possible places where $x^{(i+1)}|_n$ may appear are $w_j(x)$ for some $1\le j\le i$. 
However,  $x^{(i+1)}|_n$ cannot appear in $w_j(x)$ for all $1\le j\le i$ because the maximal length of common prefixes between $x^{(i+1)}|_n$ and $w_j(x)$ is $n_j$ due to the existence of $\overline{x_{n_j+1}^{(j-1)}}$ in $w_j(x)$ and $n>n_j$.  
Therefore, $R_n(x^*)= \ell_{i+1}$, that is, $x^*\in A(\{n_i\}, \{ \ell_i \})$. 
So we have the map $g: F_p\to A(\{n_i\}, \{ \ell_i \})$ defined as $g(x)=x^*$. 

In the following, we show that $g^{-1}$ is nearly Lipschitz on $g(F_p)$. 
Indeed, suppose $d(x^*, y^*)< m^{-k}$ for some $x, y \in F_p$ 
 and $\ell_i\leq k < \ell_{i+1}$, then $x_1^*x_2^*\cdots x_k^*=y_1^*y_2^*\cdots y_k^*.$ 
Since $x=g^{-1}(x^*)$ is obtained by removing the parts of $w_j(x)$ from $x^*$ and similar for $y$, it turns out that $x_1x_2\cdots x_{k'}=y_1y_2\cdots y_{k'}$, where
$$k  =  k-\sum_{j=1}^{i}(n_j+3).$$   
Since $\sum_{j=1}^i(n_j+3)\leq i(n_i+3)$, by condition (ii) and $\ell_i\leq k$, for any $\varepsilon>0$, there exists $M>0$ such that for all $k>M$, we have 
$$\sum_{j=1}^i(n_j+3)\leq i(n_i+3)<\ell_i\varepsilon <\varepsilon k.$$   
So $d(x,y)\leq m^{-k'} <m^{-(1-\varepsilon)k}.$ 
Thus
$$d(g^{-1}(x^*),g^{-1}(y^*))\leq d(x^*,y^*)^{1-\varepsilon}.$$

Therefore, $\dim_{\rm H}g(F_p)\geq (1-\varepsilon)\dim_{\rm H}F_p$  (see \cite[Proposition 2.3]{Fal90}).
So $\dim_{\rm H}g(F_p)\geq \dim_{\rm H}F_p$ by letting $\varepsilon\to 0$. 
Notice that $A(\{n_i\}, \{ \ell_i \})\supset g(F_p)$ and $\dim_{\rm H}F_p = \frac{p-2}{p}$,   
we have
$$\dim_{\rm H}A(\{n_i\}, \{ \ell_i \})\geq \frac{p-2}{p}.$$    
We conclude the assertion of the lemma by letting $p\to\infty$. 
\end{proof}

The proof  of the one-dimensional part relies on the applications of Lemma \ref{Cantorsetlemma} by constructing proper sequences $\{n_i\}$ and $\{\ell_i\}$, and verifying the corresponding $A(\{n_i\}, \{ \ell_i \})$ is a subset of $E_{\alpha, \beta}^\varphi$.
For any $x\in A(\{n_i\}, \{ \ell_i \})$, since $R_n(x)=\ell_i$ for all $n_{i-1}+1\leq n\leq n_i$, we know that 
 \begin{align*}
\liminf_{n\to\infty}\frac{\log R_n(x)}{\varphi(n)}&=\liminf_{i\to\infty}\frac{\log\ell_i}{\varphi(n_i)},\\  
\limsup_{n\to\infty}\frac{\log R_n(x)}{\varphi(n)}&=\limsup_{i\to\infty}\frac{\log\ell_i}{\varphi(n_{i-1}+1)}.
\end{align*}
So the subsequences $\{n_i\}$ and $\{n_i+1\}$ are essential for the lower and upper recurrence rates.
The following two technical 
lemmas are needed for the construction of the sequences $\{n_i\}$ and $\{\ell_i\}$.

\begin{lemma}\label{subseq1}
For a positive 
 function $\varphi(n)$ and a constant $C \ge 1$ we can choose a sequence of positive integers $\{n_i\}_{i =1}^\infty$ satisfying that
\begin{equation*}
\limsup_{i \to \infty} \frac{\varphi(n_i)}{\log n_i} = \gamma, \qquad
\liminf_{i \to \infty} \frac{\varphi(n_i+1)}{\log (n_i+1)} = \delta
\end{equation*}
and
$$ \lim_{i \to \infty} \dfrac{\log n_{i+1} }{\log n_i} = C, \qquad \lim_{i \to \infty} \frac{\log n_i}{i} = \infty. $$
\end{lemma}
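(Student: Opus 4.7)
The approach is to assemble $\{n_i\}$ by interleaving two families of witnesses (one for $\gamma$, one for $\delta$) with ``filler'' integers chosen to enforce the prescribed growth rate. By the very definitions of $\gamma$ and $\delta$, for every $\varepsilon > 0$ both sets
$$\{n : \varphi(n)/\log n > \gamma - \varepsilon\} \quad\text{and}\quad \{n : \varphi(n)/\log n < \delta + \varepsilon\}$$
are infinite, so I can extract sequences $\{a_k\}$ and $\{b_k\}$ with $\varphi(a_k)/\log a_k \to \gamma$ and $\varphi(b_k)/\log b_k \to \delta$. By thinning, I may assume they are pairwise disjoint, that $a_k \neq b_{k'} - 1$ for all $k,k'$, and that the merged increasing sequence $c_1 < c_2 < \cdots$ drawn from $\{a_k\} \cup \{b_k - 1\}$ grows so rapidly that $\log c_{j+1} / \log c_j \to \infty$; both $\{a_k\}$ and $\{b_k - 1\}$ are then infinite subsequences of $\{c_j\}$.

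Between consecutive terms $c_j < c_{j+1}$ I insert $m_j$ filler integers $f^j_1 < \cdots < f^j_{m_j}$ whose logarithms form a geometric (resp.\ arithmetic) progression bridging $\log c_j$ and $\log c_{j+1}$. When $C > 1$, take $\log f^j_k \approx \log c_j \cdot \rho_j^k$ with $\rho_j = (\log c_{j+1}/\log c_j)^{1/(m_j+1)}$ and $m_j = \lfloor \log(\log c_{j+1}/\log c_j) / \log C \rfloor$; a direct computation gives $\rho_j \to C$, so every consecutive log-ratio within the block tends to $C$. When $C = 1$, take $\log f^j_k \approx \log c_j + k\Delta_j$ with $\Delta_j = (\log c_{j+1} - \log c_j)/(m_j+1)$ and $m_j$ chosen large enough that $\Delta_j / \log c_j \to 0$; the consecutive ratios then equal $1 + O(\Delta_j/\log c_j) \to 1$. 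Rounding the targets to nearest integers perturbs each log-ratio by $O(1/c_j)$, which is negligible. Labelling the concatenation $c_1, f^1_1, \ldots, f^1_{m_1}, c_2, f^2_1, \ldots$ as $\{n_i\}$, the ratio condition $\log n_{i+1}/\log n_i \to C$ holds by construction, while the limsup and liminf assertions follow because infinitely many $n_i$ lie in $\{a_k\}$ and infinitely many $n_i+1$ lie in $\{b_k\}$, combined with the universal bounds $\limsup \varphi(n)/\log n \le \gamma$ and $\liminf \varphi(n)/\log n \ge \delta$.

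The main technical obstacle is ensuring $\log n_i / i \to \infty$. When $C > 1$ this is automatic, since the ratio condition forces $\log n_i$ to grow essentially like a positive constant times $C^i$, which dwarfs $i$. The delicate case is $C = 1$, where $m_j$ must be chosen large enough for $\Delta_j/\log c_j \to 0$ but small enough that the cumulative index does not outpace $\log n_i$. Taking for instance $m_j = \lfloor (\log c_{j+1} - \log c_j)/\sqrt{\log c_j} \rfloor$ achieves both simultaneously: a short telescoping estimate on $\sum_j m_j$ (with $\log c_j$ growing geometrically) yields $i = O(\sqrt{\log n_i})$, hence $\log n_i / i \to \infty$.
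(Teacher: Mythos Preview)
Your approach is essentially the paper's: pick witnesses for $\gamma$ and for $\delta$, then bridge consecutive witnesses with filler terms whose logarithms are spaced geometrically (for $C>1$) or arithmetically (for $C=1$). The $C>1$ case matches the paper almost verbatim and is fine.

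In the $C=1$ case there is a slip. You first stipulate $\log c_{j+1}/\log c_j \to \infty$, but then invoke ``$\log c_j$ growing geometrically'' to conclude $i = O(\sqrt{\log n_i})$; these two growth hypotheses are incompatible, and under the first one your stated bound is actually false. Indeed, with $m_j = \lfloor (\log c_{j+1}-\log c_j)/\sqrt{\log c_j}\rfloor$ the index $I_j$ of the witness $c_j$ already satisfies
\[
\frac{I_j}{\sqrt{\log c_j}} \;\ge\; \frac{m_{j-1}}{\sqrt{\log c_j}} \;\sim\; \frac{\log c_j/\sqrt{\log c_{j-1}}}{\sqrt{\log c_j}} \;=\; \sqrt{\frac{\log c_j}{\log c_{j-1}}} \;\to\; \infty.
\]
What \emph{is} true, and all that is needed, is the weaker statement $i = o(\log n_i)$: since $(I_j+k)/(\log c_j + k\Delta_j)$ is monotone in $k$, it suffices to check block endpoints, and there $I_j/\log c_j \to 0$ because the dominant contribution $m_{j-1}/\log c_j \sim 1/\sqrt{\log c_{j-1}}$ while the earlier $m_l$ sum to $O(\log c_{j-1})$ by super-geometric growth of $\log c_l$. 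So your construction does work, but the final estimate must be restated and reproved. The paper sidesteps this issue entirely in the $C=1$ case by prescribing $k^2 \le \log n_k < (k+1)^2$ from the outset, which makes both $\log n_{k+1}/\log n_k \to 1$ and $\log n_k/k \to \infty$ immediate.
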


\begin{proof}
First, suppose that $C >1$.
Choose $n_1 > e$.
For each $n_k$ choose an integer $m_k$ such that
$$\log (\log m_k) - \log (\log n_k) \ge k \log C $$
and
\begin{equation*}
\left| \frac{\varphi(m_k)}{\log m_k} - \gamma \right| < \frac 1k \text{ for } 0 \le  \gamma < \infty, \qquad
\frac{\varphi(m_k)}{\log m_k}  > k  \text{ for } \gamma = \infty.
\end{equation*}
Denote
\begin{equation}\label{dd}
d_k = \left \lfloor \frac{\log (\log m_k) - \log (\log n_k)}{\log C} \right\rfloor  \ge k
\end{equation}
where $\lfloor \cdot \rfloor$ is the floor function.
For $0 \le j \le d_k$ define $r_{k+j}$ by letting   
\begin{equation}\label{dx}
\log (\log r_{k + j}) = \log (\log n_k) +  \frac j{d_k}  \cdot \left( \log (\log m_k) - \log (\log n_k) \right) .
\end{equation}
Then  for $1 \le j \le d_k$  
$$
\log \left( \frac{\log r_{k + j}}{\log r_{k + j-1}} \right)  = \frac {\log (\log m_k) - \log (\log n_k)}{d_k} .   
$$
Since
\begin{equation*}
\log C \le \frac {\log (\log m_k) - \log (\log n_k) }{d_k}
< \frac {(d_k+1)\log C}{d_k} \le \left( 1 + \frac{1}{k} \right) \log C,
\end{equation*}
we have for $ 1 \le j \le d_k$
\begin{equation}\label{pc}
C \le \frac{\log r_{k + j}}{\log r_{k + j-1}} < C^{1+1/k}.
\end{equation}
Let $n_{k+j} = \lceil r_{k+j} \rceil$ for $ 1 \le j \le d_k$,
where $\lceil t\rceil$ is the ceiling function. 
Note that $n_{k+d_k} = m_k$.  

Next, we choose $m'_k$ such that
$$
\log (\log m'_k) - \log (\log n_{k+d_k}) \ge (k+d_k) \log C
$$
and
\begin{align*}
\left| \frac{\varphi(m'_k+1)}{\log (m_k'+1)} - \delta \right| < \frac 1{k+d_k}, \quad &\text{ for } 0 \le \delta < \infty,\\
\frac{\varphi(m'_k+1)}{\log (m'_k+1)}  > k+d_k, \quad &\text{ for } \delta = \infty.
\end{align*}
Then, denote $d'_k$ the same way as in \eqref{dd} (replacing $m_k$, $n_k$ by $m'_k$, $n_{k+d_k}$ respectively)
and define $r_{k+d_k+j}$ as \eqref{dx} for $1 \le j \le d'$ and $n_{k+d_k+j} = \lceil r_{k+d_k+j} \rceil$ for $1 \le j \le d'_k$. 
Note that $n_{k+d_k+d'_k} = m'_k$.  
We defined $n_{k+1}, n_{k+1}, \ldots, n_{k+d_k+d'_k}$ from $n_k$.    
Starting from $n_{k+d_k+d'_k}$ we repeat this procedure again.    
Inductively, we obtain a sequence $\{n_k\}$by this procedure satisfying 
$$ \limsup_k \frac{\varphi(n_k)}{\log n_k} = \gamma, \qquad  \liminf_k \frac{\varphi(n_k+1)}{\log (n_k+1)} = \delta.$$
On the other hand, we deduce from \eqref{pc} by letting $k$ to infinity that 
$$ \lim_{i \to \infty} \dfrac{\log r_{i+1} }{\log r_i} = C .$$   
Since $r_i \le n_i < r_i +1$ and $r_i$ goes to infinity, we get  
$$ \lim_{i \to \infty} \dfrac{\log n_{i+1} }{\log n_i} = C .$$    
Also by \eqref{pc} we have 
$$ \log n_i  \ge \log r_i \ge  C^{i-1} \log r_1 = C^{i-1} \log n_1.$$
It follows by the assumption of $C >1$ that  
$$\lim_{i \to \infty} \frac{\log n_i}{i} = \infty. $$   

Now assume that $C=1$.
Let $n_1=3$. Then $1^2 \le \log n_1 < 2^2$.
For each integer $n_k$ with $k^2 \le \log n_k < (k+1)^2$, choose an integer $m_k$ such that
$$ \log m_k \ge (k+1)^2$$
and
\begin{align*}
\left| \frac{\varphi(m_k)}{\log m_k} - \gamma \right| < \frac 1k \text{ for } 0 \le \gamma < \infty, \qquad
\frac{\varphi(m_k)}{\log m_k}  > k, \text{ for } \gamma = \infty.
\end{align*}
Denote
\begin{equation}\label{dd2}
d_k = \left \lfloor  \sqrt{\log m_k}  - k \right\rfloor.
\end{equation}
For each $ 1 \le j < d_k$  put
\begin{equation*}
 n_{k + j} = \left \lceil \exp((k+j)^2) \right\rceil \text{ and } n_{k+d_k} = m_k.
\end{equation*}
Then for all $ 1 \le j \le d_k$
\begin{equation}\label{bn} (k+j)^2 \le \log n_{k+j} < (k+j+1)^2.
\end{equation}

Next, we choose $m_k'$ such that
$$ \log m'_k \ge ( k+d_k+1)^2 $$
and
\begin{align*}
\left| \frac{\varphi(m'_k+1)}{\log (m'_k+1)} - \delta \right| < \frac 1{k+d_k}, \quad &\text{ for } 0 \le \delta < \infty,\\
\frac{\varphi(m'_k+1)}{\log (m'_k+1)}  > k+d_k, \quad &\text{ for } \delta = \infty.
\end{align*}
Then, denote $d'_k = \left \lfloor  \sqrt{\log m'}  - (k+d_k) \right\rfloor$ as before
and define $n_{k+d_k+j} = \left \lceil \exp((k+d_k+j)^2) \right\rceil$ ($1 \le j < d'_k$) and $n_{k+d_k+d'_k} = m'_k$.
Repeating this procedure, we obtain a sequence $\{n_k\}$ satisfying
$$ \limsup_k \frac{\varphi(n_k)}{\log n_k} = \gamma, \qquad  \liminf_k \frac{\varphi(n_k+1)}{\log (n_k+1)} = \delta$$
and $$k^2 \le \log n_k < (k+1)^2,$$
which implies that
\begin{equation*}
\lim_{i \to \infty} \dfrac{\log n_{i+1} }{\log n_i} = 1, \qquad \lim_{i \to \infty} \frac{\log n_i}{i} = \infty. \qedhere
\end{equation*}
\end{proof}

\begin{lemma}\label{subseq2}
Let $\varphi(n)$ be a positive monotone increasing function which tends to infinity as $n \to \infty$. 

(i) We can choose a sequence of positive integers $\{m_i\}_{i =1}^\infty$ satisfying that  for each $i \ge 1$   
$$\varphi(m_{i+1}) - \varphi(m_i+1) \le 3, \qquad  \varphi(m_{i+1}) - \varphi(m_i) > 1. $$

(ii) We can choose a sequence of positive integers $\{m_i\}_{i =1}^\infty$ satisfying that    
$$\frac{\varphi(m_{i+1})}{\varphi(m_i+1)} \to 1 \text{ and } \frac{\log m_{i+1}}{\log m_i} \to 1$$
and for each $i$,  either $m_{i+1} \ge m_i \log m_i$ or $\varphi(m_{i+1}) - \varphi(m_i) > 1$.  
\end{lemma}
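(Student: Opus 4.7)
The plan is to construct both sequences inductively, choosing $m_{i+1}$ based on the local jump behavior of $\varphi$ at $m_i$.

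For part (i), given $m_i$, I would split into two cases. If $\varphi(m_i+1) - \varphi(m_i) > 1$, then the trivial choice $m_{i+1} = m_i + 1$ makes both required conditions immediate: $\varphi(m_{i+1}) - \varphi(m_i+1) = 0 \le 3$ and $\varphi(m_{i+1}) - \varphi(m_i) > 1$. Otherwise, when $\varphi(m_i+1) - \varphi(m_i) \le 1$, I would seek $m_{i+1}$ such that $\varphi(m_{i+1})$ lies in the target interval $(\varphi(m_i) + 1,\, \varphi(m_i+1) + 3]$, which has length between $2$ and $3$. Since $\varphi \to \infty$, such $m_{i+1}$ is available unless $\varphi$ has a very large jump that skips this interval entirely; in that boundary case one should locate $m_i$ at the integer just before the jump (and rerun the case analysis there), using the jump itself to satisfy condition $(2)$ with room to spare.

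For part (ii), I would proceed analogously, now targeting the asymptotic ratio conditions. I would first try the small step $m_{i+1} = m_i + 1$: the ratios trivially tend to $1$, and if $\varphi(m_i+1) - \varphi(m_i) > 1$ then the second alternative in the \emph{either/or} clause is satisfied. When this fails, I would instead take $m_{i+1}$ to be approximately $\lceil m_i \log m_i \rceil$, so the first alternative $m_{i+1} \ge m_i \log m_i$ holds by construction. Since
$$\frac{\log m_{i+1}}{\log m_i} \le \frac{\log m_i + \log\log m_i + o(1)}{\log m_i} \longrightarrow 1,$$
the logarithmic ratio condition is immediate, and to obtain $\varphi(m_{i+1})/\varphi(m_i+1) \to 1$ I would tune $m_{i+1}$ (within the allowed window above $m_i \log m_i$, or below it when a $\varphi$-jump is available) so that $\varphi(m_{i+1})$ does not exceed $\varphi(m_i+1)$ by more than a factor $1 + o(1)$, mirroring the additive tolerance used in part (i).

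The main obstacle in both parts is that $\varphi$ is only assumed to be monotone and divergent, so it can exhibit arbitrarily large jumps at arbitrary positions; the constants $3$ in (i) and the threshold $m_i \log m_i$ in (ii) are calibrated precisely to absorb such pathological jump behavior. In particular, the inductive step must handle the situation where the greedy choice of $m_{i+1}$ would violate one of the two requirements, and the fix is to exploit the flexibility in the starting point (part (i)) or the disjunction in the condition (part (ii)), with careful look-ahead to ensure the conditions propagate consistently across all steps.
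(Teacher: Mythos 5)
Your overall strategy (greedy ``first index where $\varphi$ exceeds $\varphi(m_i)+1$'', with special treatment of large jumps by using the integer just before the jump) is the same as the paper's, but the one place you wave your hands --- the ``boundary case'' where $\varphi$ jumps over the entire target window --- is precisely where the content of the lemma lies, and the fix you describe does not work as stated. Concretely, let $N=\min\{n:\varphi(n)>\varphi(m_i)+1\}$ and suppose $\varphi(N)>\varphi(m_i+1)+3$. If you insert the pre-jump integer as the \emph{next} term, $m_{i+1}=N-1$, then $\varphi(m_{i+1})-\varphi(m_i)\le\varphi(N-1)-\varphi(m_i)\le 1$, so the lower bound $\varphi(m_{i+1})-\varphi(m_i)>1$ fails at step $i$. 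If instead you retroactively relocate the \emph{current} term, $m_i\mapsto N-1$, then $\varphi(m_i)$ may increase by up to $1$, and the already-verified bound $\varphi(m_i)-\varphi(m_{i-1}+1)\le 3$ from the previous step can degrade to $4$; so the conditions do not ``propagate consistently'' without a further argument. The same unresolved issue reappears in your part (ii), where the tuning needed to get $\varphi(m_{i+1})/\varphi(m_i+1)\to 1$ is exactly this additive control.

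The paper resolves this with a one-step look-ahead built into the \emph{definition} of $m_i$: it first constructs the auxiliary sequence $n_{i+1}=\min\{n:\varphi(n)>\varphi(n_i)+1\}$ (so that $\varphi(n_{i+1}-1)\le\varphi(n_i)+1$ by minimality), and then sets $m_i=n_i$ if $\varphi(n_{i+1})-\varphi(n_i)\le 2$ and $m_i=n_{i+1}-1$ if $\varphi(n_{i+1})-\varphi(n_i)>2$. Placing $m_i$ (not $m_{i+1}$) just before a large jump means $m_i+1=n_{i+1}$ sits \emph{after} the jump, so the jump is absorbed into the quantity $\varphi(m_i+1)$ that appears in the upper bound, giving $\varphi(m_{i+1})-\varphi(m_i+1)\le\varphi(n_{i+2}-1)-\varphi(n_{i+1})\le 1$, while the lower bound survives because $\varphi(m_{i+1})\ge\varphi(n_{i+1})>\varphi(n_i)+2\ge\varphi(n_{i+1}-1)+1=\varphi(m_i)+1$. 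For part (ii) the same device is combined with the alternative choice $n_{i+1}=\lceil n_i\log n_i\rceil$ when $\varphi$ grows too slowly to cross $\varphi(n_i)+1$ by then, which yields the either/or clause. You should redo your construction with this two-sequence scheme (or an equivalent explicit look-ahead) rather than a local patch at the step where the greedy choice fails.
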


\begin{proof}
(i) Choose $n_1 \ge 3$. For each $n_i$ we choose $n_{i+1}$ as   
$$n_{i+1} =  \min \{n : \varphi(n) > \varphi(n_i) +1 \}. $$        
and define a sequence $\{m_i\}$ as    
$$m_i = \begin{cases}
n_i, & \text{ if } \varphi(n_{i+1}) - \varphi(n_i) \le 2, \\
n_{i+1}- 1, & \text{ if } \varphi(n_{i+1}) - \varphi(n_i) > 2. \\
\end{cases}$$
Then, we have
\begin{equation}\label{12.4}
n_i \le m_i \le n_{i+1}-1
\end{equation}
and
\begin{equation}\label{12.5}
\varphi(n_{i+1}-1) - \varphi(n_{i}) \le 1.
\end{equation}
If $\varphi(n_{i+1}) - \varphi(n_i) \le 2$, 
then we have from \eqref{12.4} and \eqref{12.5} 
\begin{equation*}\begin{split}
\varphi(m_{i+1}) -  \varphi(m_i+1) &\le \varphi(n_{i+2}-1) -  \varphi(n_i) \\
&=  \varphi(n_{i+2}-1) -  \varphi(n_{i+1}) + \varphi(n_{i+1}) -  \varphi(n_i) \\
&\le 1+ 2 = 3
\end{split}\end{equation*}
and
$$\varphi(m_{i+1}) - \varphi(m_i) \ge \varphi(n_{i+1}) - \varphi(n_i) > 1.$$
If $\varphi(n_{i+1}) - \varphi(n_i) > 2$, we have $m_i = n_{i+1} -1$ so that by  \eqref{12.5}
\begin{equation*}
\varphi(m_{i+1}) -  \varphi(m_i+1) \le \varphi(n_{i+2}-1) -  \varphi(n_{i+1}) \le 1
\end{equation*}
and
\begin{align*}
\varphi(m_{i+1}) - \varphi(m_{i}) &\ge \varphi(n_{i+1}) - \varphi(n_{i+1}-1) \\
&=  \varphi(n_{i+1}) -\varphi(n_i)+\varphi(n_i) - \varphi(n_{i+1}-1)> 1. 
\end{align*}

(ii) Choose $n_1 \ge 3$. For each $n_i$ we choose $n_{i+1}$ as   
$$n_{i+1} = \begin{cases} \min \{n : \varphi(n) > \varphi(n_i) +1 \}, &\text{if } \varphi(\lceil n_i\log n_i \rceil) > \varphi(n_i) + 1, \\    
\lceil n_i \log n_i \rceil, &\text{otherwise} \end{cases}$$        
and define a sequence $\{m_i\}$ as    
$$m_i = \begin{cases}
n_i, & \text{ if } \varphi(n_{i+1}) - \varphi(n_i) \le 2, \\
n_{i+1}- 1, & \text{ if } \varphi(n_{i+1}) - \varphi(n_i) > 2. \\
\end{cases}$$
Then, we have
\begin{equation}\label{2.4}
n_{i+1} < n_i \log n_i +1, \qquad n_i \le m_i \le n_{i+1}-1
\end{equation}
and
\begin{equation}\label{2.5}
\varphi(n_{i+1}-1) - \varphi(n_{i}) \le 1.
\end{equation}
If $\varphi(n_{i+1}) - \varphi(n_i) \le 2$, 
then we have from \eqref{2.4} and \eqref{2.5}    
\begin{equation*}\begin{split}
\varphi(m_{i+1}) -  \varphi(m_i+1) &\le \varphi(n_{i+2}-1) -  \varphi(n_i) \\
&=  \varphi(n_{i+2}-1) -  \varphi(n_{i+1}) + \varphi(n_{i+1}) -  \varphi(n_i) \\
&\le 1+ 2 = 3
\end{split}\end{equation*}
and if $\varphi(n_{i+1}) - \varphi(n_i) > 2$, we have $m_i = n_{i+1} -1$ so that by \eqref{2.5}
\begin{equation*}
\varphi(m_{i+1}) -  \varphi(m_i+1) \le \varphi(n_{i+2}-1) -  \varphi(n_{i+1}) \le 1.
\end{equation*}
Thus, we have $$\lim_{i \to \infty} \frac{\varphi(m_{i+1})}{\varphi(m_i+1)} = 1.$$
It follows from \eqref{2.4} that 
\begin{equation*}\begin{split}
m_{i+1} &\le n_{i+2} -1 < n_{i+1} \log n_{i+1} 
< (n_{i} \log n_{i} +1) \log (n_{i} \log n_{i} +1 ) \\
&\le (m_{i} \log m_{i} +1) \log (m_{i} \log m_{i} +1),
\end{split}\end{equation*}
thus
$$ \lim_{i\to \infty} \frac{\log m_{i+1}}{\log m_i} = 1.$$
If $\varphi(n_{i+1}) - \varphi(n_i) \le 2$ and $\varphi(\lceil n_i \log n_i \rceil) > \varphi(n_i) + 1$, then
$$\varphi(m_{i+1}) - \varphi(m_i) \ge \varphi(n_{i+1}) - \varphi(n_i) > 1.$$
If $\varphi(n_{i+1}) - \varphi(n_i) \le 2$ and $\varphi(\lceil n_i \log n_i \rceil) \le \varphi(n_i) + 1$, then
$$m_{i+1} \ge n_{i+1} = \lceil n_{i} \log n_i \rceil = \lceil m_{i} \log m_i \rceil \ge m_ i\log m_i.$$
If $\varphi(n_{i+1}) - \varphi(n_i) > 2$, then, by \eqref{2.5}, we have
\begin{align*}
\varphi(m_{i+1}) - \varphi(m_{i}) &\ge \varphi(n_{i+1}) - \varphi(n_{i+1}-1) \\
&=  \varphi(n_{i+1}) -\varphi(n_i)+\varphi(n_i) - \varphi(n_{i+1}-1)> 1. \qedhere 
\end{align*}
\end{proof}

Now we are ready to show the left part of Theorem~\ref{generalrate}.

\begin{proof}[Proof of the one-dimensional part of Theorem~\ref{generalrate}]
Recall 
$$\limsup_{n \to \infty} \frac{\varphi(n)}{\log n} = \gamma, \quad
\liminf_{n \to \infty} \frac{\varphi(n)}{\log n} = \delta. $$

(i) Suppose that $\alpha = \beta = \infty$.

Choose $n_i = i$  and
 $\ell_{i} = \max \{  e^{i \varphi(i)}, \ell_{i-1} + n_{i-1} + 3 , i^2(i+3) \} $, $\ell_0 = 1$.
Then $\{ n_i \}$ and $\{ \ell_i \}$ and  satisfy the conditions of Lemma~\ref{Cantorsetlemma},  
thus for any $x \in A(\{n_i\}, \{ \ell_i \})$, note that $R_n(x)=\ell_i$ for all $ n_{i-1}+1\leq n\leq n_i$, we get
$$\lim_{n \to \infty} \frac{\log R_n (x)}{\varphi (n)} \geq \lim_{i\to\infty}\frac{\log\ell_i}{\varphi(n_i)} \ge \lim_{i \to \infty} \frac{ i \varphi(i) }{\varphi (i)} = \infty.$$ 
Therefore, $A(\{n_i\}, \{ \ell_i \})\subset E^{\varphi}_{\alpha,\beta}$.

(ii) Suppose that $\beta = \infty$, $0\le \alpha < \infty$ and $\gamma \in [1/ \alpha, \infty]$ for $\alpha >0$, $ \gamma = \infty$ for $\alpha  =0$.

We can choose an increasing sequence of positive integers $\{n_i\}$ such that
$$
\lim_{i \to \infty} \frac{\varphi(n_i)}{\log n_i} = \gamma,
$$
$$
\varphi(n_i) > i \varphi (n_{i-1}+1), \ 
\log n_i > i\varphi (n_{i-1}+1), \
\log n_{i} > \log n_{i-1} + i^2+2 
$$
and if $\gamma = \infty$, 
$$\frac{\varphi(n_{i+1})}{\log n_{i+1}} > \frac{\varphi(n_i)}{\log n_i}.$$

Let
$$\ell_i := 
\begin{cases}
\lceil e^{\alpha \varphi(n_i)} \rceil, &\text{ if } 0 < \alpha < \infty, \gamma = \infty, \\
\lceil (n_i)^{\alpha\gamma} \log n_i \rceil, &\text{ if } 0 < \alpha < \infty, \gamma < \infty \\
\lceil n_i \log n_i \rceil, &\text{ if } \alpha = 0, \gamma = \infty.
\end{cases}
$$
If $0 < \alpha < \infty, \gamma = \infty$, then for $i$ large enough to make $\frac{\varphi(n_{i+1})}{\log n_{i+1}} > \frac{\varphi(n_i)}{\log n_i} >  \frac 2{\alpha}$  and  $n_i \ge 3$,
\begin{align*}
\ell_{i+1}  - \ell_i  &\ge e^{\alpha \varphi(n_{i+1})}- e^{\alpha \varphi(n_i)} -1
= e^{\alpha \varphi(n_i)} \left( e^{\alpha (\varphi(n_{i+1}) - \varphi(n_i) )} -1 \right)  -1 \\
&> e^{2 \log n_i} \left( e^{\alpha \left (\frac{\log n_{i+1}}{\log n_i}\varphi(n_i) - \varphi(n_i)\right)} -1 \right)  -1 \\
&= (n_i)^2 \left( e^{\frac{\alpha\varphi(n_i)}{\log n_i} \left (\log n_{i+1}  - \log n_i \right)} -1 \right)  -1 \\
&> (n_i)^2 \left( e^{2 \left (\log n_{i+1}  - \log n_i \right)} -1 \right)  -1 \\
&= (n_{i+1})^2 - (n_i)^2 - 1  = (n_{i+1}- n_i)(n_{i+1} + n_i)  -1 \ge n_i +3 
\end{align*} 
and 
\begin{align*}
\lim_{i \to \infty} \frac{i(n_i+3)}{\ell_i} &\le \lim_{i \to \infty} \frac{i(n_i+3)}{n_i^2 } \\
&= \left( \lim_{i \to \infty} \frac{n_i+3}{n_i} \right) \cdot \left( \lim_{i \to \infty} \frac{i}{n_i} \right) 
= 1 \cdot 0 = 0. \end{align*}

For other cases of $0 < \alpha < \infty, \gamma < \infty$ and $\alpha = 0, \gamma = \infty$, for large $i$ with $n_i \ge 4$     
$$\ell_{i+1}  - \ell_i  \ge n_i (\log n_{i+1} - \log n_i) - 1 > 2n_i  -1 \ge n_i +3 $$ 
and 
\begin{align*}
\lim_{i \to \infty} \frac{i(n_i+3)}{\ell_i} &\le \lim_{i \to \infty} \frac{i(n_i+3)}{n_i \log n_i } \\
&= \left( \lim_{i \to \infty} \frac{n_i+3}{n_i} \right) \cdot \left( \lim_{i \to \infty} \frac{i}{\log n_i} \right) 
= 1 \cdot 0 = 0. \end{align*}

Therefore, $\{n_i\}$ and $\{ \ell_i \}$ satisfy the conditions of Lemma~\ref{Cantorsetlemma}.
Since 
$$
\lim_{i \to \infty} \frac{ \log\ell_i }{\varphi (n_{i})} 
=\begin{cases}
\alpha , &\text{ if } 0 < \alpha < \infty, \gamma = \infty, \\
\lim \frac{\log \ell_i}{\log n_i} \lim \frac{\log n_i}{\varphi(n_i)} = \alpha \gamma \cdot \frac{1}{\gamma} = \alpha , &\text{ if } 0 < \alpha < \infty, \gamma < \infty, \\
\lim \frac{\log \ell_i}{\log n_i} \lim \frac{\log n_i}{\varphi(n_i)} = 1  \cdot \frac{1}{\gamma} = 0, &\text{ if } \alpha = 0, \gamma = \infty,
\end{cases}
$$
for any $x \in A(\{n_i\}, \{ \ell_i \})$ we get
$$ 
\liminf_{n \to\infty} \frac{\log R_n (x)}{\varphi (n)} = \liminf_{i \to \infty} \frac{ \log\ell_i }{\varphi (n_{i})} 
= \alpha
$$
and
\begin{align*}
&\limsup_{n\to \infty} \frac{\log R_n (x)}{\varphi (n)} 
= \limsup_{i \to \infty} \frac{ \log \ell_i }{\varphi (n_{i-1}+1)} \\
&=  \begin{cases} 
\lim  \frac{ \log \ell_i }{\varphi (n_i)} \lim \frac{\varphi (n_i)}{\varphi (n_{i-1}+1)} = \alpha \cdot \infty = \infty , &\text{ if } 0 < \alpha < \infty, \gamma = \infty, \\
\lim \frac{\log \ell_i}{\log n_i} \lim \frac{\log n_i}{\varphi(n_{i-1}+1)} = \alpha \gamma \cdot \frac{1}{\gamma} = \alpha , &\text{ if } 0 < \alpha < \infty, \gamma < \infty, \\
\lim \frac{\log \ell_i}{\log n_i} \lim \frac{\log n_i}{\varphi(n_{i-1}+1)} = 1  \cdot \frac{1}{\gamma} = 0, &\text{ if } \alpha = 0, \gamma = \infty.
\end{cases}
\end{align*}
Thus $A(\{n_i\}, \{ \ell_i \})\subset E^{\varphi}_{\alpha,\beta}$.


In the rest of the proof we only consider the case of  finite $\alpha, \beta$.  
Define 
$$
A := \begin{cases} \alpha \gamma, &\text{ if } 0< \alpha, \gamma < \infty, \\
1, &\text{ if } \alpha = 0, \gamma = \infty, \\
\infty, &\text{ if } \alpha > 0, \gamma = \infty, 
\end{cases}
\quad
B := \begin{cases} \beta \delta, &\text{ if } 0<  \beta, \delta < \infty, \\
1, &\text{ if } \beta = 0, \delta = \infty, \\
\infty, &\text{ if } \beta > 0, \delta = \infty.
\end{cases}
$$

(iii) Suppose that $0 \le \alpha \le \beta < \infty$ and $A = B = \infty$, i.e., $0 < \alpha \le \beta < \infty$ 
and $\delta = \gamma = \infty$. 

Let $\{ m_i\}$ be the sequence given by Lemma~\ref{subseq2} (i), thus
$\varphi (m_{i+1}) \le \varphi(m_i +1) + 3$ and $\varphi (m_{i+1}) > \varphi(m_i) + 1$.

Let $\tilde \ell_1 = e^{\beta \varphi(m_1)}$.
For each $k$ with $\tilde \ell_{k} = e^{\beta \varphi(m_k)}$,
 we will choose $k'$ and define $\tilde \ell_{k+1}, \tilde \ell_{k+2}, \dots, \tilde \ell_{k'}$, where 
$\tilde \ell_{k'} = e^{\beta \varphi(m_{k'})}$.
Then by repeating the procedure inductively we define $\{ \tilde \ell_i\}$.
Let $\tilde \ell_{k} = e^{\beta \varphi(m_k)}$,
then there exists a minimal integer $d \ge 1$ such that 
$$\varphi(m_{k+d}) \ge \left( \frac{2\beta}{\alpha} - 1 \right) \varphi(m_k).$$ 
Define for $1 \le j < d$
$$
\tilde \ell_{k + j} = e^{ \frac{2\beta -\alpha}{2}\varphi(m_{k})  + \frac{\alpha}{2} \varphi(m_{k+j})  } > e^{\alpha \varphi(m_{k+j})},
$$ 
and
$$
\tilde \ell_{k + d} = e^{\alpha \varphi(m_{k+d})}, \qquad  
\tilde \ell_{k + d+1} =  e^{\beta \varphi(m_{k+d+1})}.
$$
Then continue this procedure for $k' = k+d+1$.

For $0 \le j < d-1$ 
\begin{align*}
\tilde \ell_{k+j+1}  - \tilde \ell_{k+j}  
&= \tilde \ell_{k+j}\left( e^{ \frac\alpha2 ( \varphi(m_{k+j+1})- \varphi(m_{k+j})) } -1 \right)  \\
&> \tilde \ell_{k+j} \frac{\alpha( \varphi(m_{k+j+1})- \varphi(m_{k+j})) }{2}  
> \frac{\alpha}{2} \tilde \ell_{k+j} 
\end{align*} 
and
\begin{align*}
\tilde\ell_{k+d}  - \tilde\ell_{k+d-1}
&= \tilde \ell_{k+d-1} \left( e^{\alpha \varphi(m_{k+d}) - \frac{2\beta -\alpha}{2}\varphi(m_{k})  - \frac{\alpha}{2} \varphi(m_{k+d-1})  } -1 \right)  \\
&> \tilde \ell_{k+d-1} \left( \alpha \varphi(m_{k+d}) - \frac{2\beta - \alpha}{2} \varphi(m_k) - \frac{\alpha}{2}\varphi(m_{k+d-1}) \right) \\ 
&\ge \tilde \ell_{k+d-1} \left( \frac{\alpha}{2} \varphi(m_{k+d}) - \frac{\alpha}{2}\varphi(m_{k+d-1}) \right) > \frac{\alpha}{2} \tilde \ell_{k+d-1},
\end{align*} 
\begin{align*}
\tilde \ell_{k+d+1}  - \tilde\ell_{k+d} &= \tilde \ell_{k+d} \left( e^{\beta \varphi(m_{k+d+1}) - \alpha\varphi(m_{k+d}) } -1 \right) \\
&> \tilde \ell_{k+d} \left( \beta \varphi(m_{k+d+1}) - \alpha\varphi(m_{k+d}) \right) > \beta \tilde \ell_{k+d} > \frac{\alpha}{2} \tilde \ell_{k+d}.
\end{align*} 
Since $\delta = \infty$, for large $i_0$ so as to $\varphi(m_i) >  \frac{3}{\alpha} \log m_i$ for $i \ge i_0$ we have
$$
\tilde \ell_{i+1} - \tilde \ell_i > \frac{\alpha}{2} \tilde \ell_i \ge\frac{\alpha}{2} e^{\alpha \varphi(m_i)}
> \frac{\alpha}{2} (m_i)^3,
$$
thus for $i\ge i_0$  
$$
\tilde \ell_{i} > \frac{\alpha}{2}  (m_i)^3  \ge \frac{\alpha}{2}  i^2 m_i .
$$
Thence, $\{ m_i\}$ and $\{ \ell_i = \lceil \tilde \ell_i \rceil \}$ satisfy the conditions of Lemma~\ref{Cantorsetlemma}.
Thus, for any $x \in A(\{m_i\}, \{ \ell_i \})$ we get
$$ 
\liminf_{n \to\infty} \frac{\log R_n (x)}{\varphi (n)} 
= \liminf_{i \to \infty} \frac{ \log\ell_i }{\varphi (m_{i})} 
= \alpha 
$$
and
\begin{align*}
\limsup_{n \to\infty} \frac{\log R_n (x)}{\varphi (n)} 
&= \limsup_{i \to \infty} \frac{ \log \ell_i }{\varphi (m_{i-1}+1)} 
= \limsup_{i \to \infty} \left( \frac{ \log \ell_i }{\varphi (m_{i})} \frac{\varphi (m_{i})}{\varphi (m_{i-1}+1)} \right) \\
&= \left( \limsup_{i \to \infty}  \frac{ \log \ell_i }{\varphi (m_i)} \right) \left( \lim_{i \to \infty} \frac{\varphi(m_i)}{\varphi (m_{i-1}+1)} \right) = \beta \cdot 1 = \beta.
\end{align*}
That is,  $A(\{m_i\}, \{ \ell_i \})\subset E^{\varphi}_{\alpha,\beta}$.

(iv) Suppose that $0 \le \alpha \le \beta < \infty$ and $1 \le A < B = \infty$.

Since $\delta = \infty$, we deduce that $\gamma = \infty$ and $A = 1$.
We can choose a sequence $\{ n_i \}$ satisfying that  
$$
\lim_{i \to \infty} \frac{\log n_i}{\varphi(n_{i-1}+1)} = \beta > 0.
$$
Note that combined with the assumption $\lim\limits_{i \to \infty} \frac{\varphi(n_i)}{\log n_i} = \infty$,
we get $\frac{\log n_{i+1}}{\log n_i}$ tends to infinity as $i\to\infty$, which implies that $\frac{i}{\log n_i}$ converges to 0. 
Let 
$$\ell_i = \lceil n_i \log n_i \rceil. $$
Then for large $i$ such that $\log \frac{n_{i+1}}{n_i} \ge 2$ and  $n_i \ge 4$
$$\ell_{i+1}  - \ell_i  \ge n_i (\log n_{i+1} - \log n_i) - 1 > 2n_i  -1 \ge n_i +3 $$ 
and
\begin{align*}
\lim_{i \to \infty} \frac{i(n_i+3)}{\ell_i} &\le \lim_{i \to \infty} \frac{i(n_i+3)}{n_i \log n_i } \\
&= \left( \lim_{i \to \infty} \frac{n_i+3}{n_i} \right) \cdot \left( \lim_{i \to \infty} \frac{i}{\log n_i} \right) 
= 1 \cdot 0 = 0. \end{align*}
Therefore,  $\{n_i\}$ and $\{ \ell_i \}$ satisfy the conditions of Lemma~\ref{Cantorsetlemma}.
Thus, for any $x \in A(\{n_i\}, \{ \ell_i \})$ we get
$$ 
\liminf_{n \to\infty} \frac{\log R_n (x)}{\varphi (n)} = \liminf_{i \to \infty} \frac{ \log\ell_i }{\varphi (n_{i})} 
= \left( \lim_{i \to \infty} \frac{ \log\ell_i }{\log n_i} \right) \left( \lim_{i \to \infty} \frac{\log n_i}{\varphi (n_{i})} \right) = 1 \cdot 0 = 0 
$$
and
\begin{align*}
\limsup_{n \to\infty} \frac{\log R_n (x)}{\varphi (n)} 
&= \limsup_{i \to \infty} \frac{ \log \ell_i }{\varphi (n_{i-1}+1)} 
= \limsup_{i \to \infty} \left( \frac{ \log \ell_i }{\log n_i} \frac{ \log n_i}{\varphi (n_{i-1}+1)} \right) \\
&= \left( \lim_{i \to \infty}  \frac{ \log \ell_i }{\log n_i} \right) \left( \lim_{i \to \infty} \frac{\log n_i}{\varphi (n_{i-1}+1)} \right) = 1 \cdot \beta = \beta.
\end{align*}
That is, $A(\{n_i\}, \{ \ell_i \})\subset E^{\varphi}_{\alpha,\beta}$.

(v) Suppose that $0 \le \alpha \le \beta < \infty$ and $1 \le A \le B < \infty$. 
By Lemma~\ref{subseq1} there exists a sequence $\{ n_i \}$ satisfying that  
$$
\limsup_{i \to \infty} \frac{\varphi(n_i)}{\log n_i} = \gamma, \quad
\liminf_{i \to \infty} \frac{\varphi(n_i+1)}{\log (n_i+1)} = \delta, \quad   
\lim_{i \to \infty} \frac{\log n_{i+1}}{\log n_i} = \frac{B}{A}.
$$
Let
$$\ell_i = \lceil (n_i)^{A} \log n_i \rceil.$$
Then $\{ n_i \}$ and $\{ \ell_i \}$ satisfies the conditions of Lemma~\ref{Cantorsetlemma}.   

If $x \in A(\{n_i\}, \{ \ell_i \})$, then $R_n (x) = \ell_i$ for each $n_{i-1} < n \le n_{i}$. Thus for $x \in A(\{n_i\}, \{ \ell_i \})$ we have for $n_{i-1} +1 \le n \le n_{i}$
$$
 \frac{\log \ell_i}{\varphi(n_{i})} \le \frac{\log R_{n}(x)}{\varphi(n)} \le \frac{\log \ell_i}{\varphi(n_{i-1}+1)},
$$
where the equalities holds for $n = n_i$ and $n = n_{i-1}+1$ respectively.
Therefore, we have
\begin{align*}
\limsup_{n \to \infty} \frac{\log R_{n}(x)}{\varphi(n)} &=
\limsup_{i \to \infty} \frac{\log \ell_i}{\varphi(n_{i-1}+1)} \\
&= \limsup_{i \to \infty} \left( \frac{\log \ell_i}{\log n_i} \cdot \frac{\log n_i}{\log (n_{i-1}+1)} \cdot \frac{\log (n_{i-1}+1)}{\varphi(n_{i-1}+1)} \right) \\
&= \lim_{i \to \infty} \frac{\log \ell_i}{\log n_i} \cdot \lim_{i \to \infty} \frac{\log n_i}{\log (n_{i-1}+1)} \cdot \varlimsup_{i \to \infty} \frac{\log (n_{i-1}+1)}{\varphi(n_{i-1}+1)} \\
&=  A \cdot \frac{B}{A} \cdot \frac 1\delta =\frac{B}{\delta} = \beta
\end{align*}
and
\begin{align*}
\liminf_{n \to \infty} \frac{\log R_{n}(x)}{\varphi(n)} &=
\liminf_{i \to \infty} \frac{\log \ell_i}{\varphi(n_i)}
= \liminf_{i \to \infty} \left( \frac{\log \ell_i}{\log n_i} \cdot \frac{\log n_i}{\varphi(n_i)} \right) \\
&= \lim_{i \to \infty} \frac{\log \ell_i}{\log n_i} \cdot \liminf_{i \to \infty} \frac{\log n_i}{\varphi(n_i)}
=  A  \cdot \frac 1\gamma = \alpha.
\end{align*}
Therefore, $x\in E_{\alpha,\beta}^\varphi$. 
It follows that $A(\{n_i\}, \{ \ell_i \}) \subset E_{\alpha,\beta}^\varphi$.

(vi) Suppose that $0 \le \alpha \le \beta < \infty$ and $1 \le B < A  \le \infty$,  
i.e., $\alpha > 0$ and $0<\delta<\infty$.  

We may assume that for all $n$
$$ \delta \le \frac{\varphi(n)}{\log n} \le \gamma.$$


 Since $\beta \delta < \alpha \gamma$ and  $\alpha \le \beta$, we get $\gamma > \delta$.
Put
\begin{equation*}
C : = \begin{cases} \frac{\alpha \gamma - \beta \delta}{\gamma -\delta}, &\text{ if } \gamma < \infty, \\
\alpha &\text{ if } \gamma = \infty,
\end{cases}
\quad
D : = \begin{cases} \frac{ (\beta - \alpha) \gamma \delta}{\gamma -\delta}, &\text{ if } \gamma < \infty, \\
(\beta-\alpha)\delta &\text{ if } \gamma = \infty.
\end{cases}
\end{equation*}
and let
$$\rho(x) := C x +D. $$
Note that, $C >0$ and $D \ge 0$. 
Thus $1\leq \beta\delta\leq\rho(x)\leq\alpha\gamma$ for all $\delta \le x \le \gamma$.

Let $\{ m_i \}$ be the sequence given by Lemma~\ref{subseq2}  (ii),    
thus we have
\begin{multline}\label{deltai}
\rho\left( \frac{\varphi(m_{i+1})}{\log m_{i+1}} \right) \log m_{i+1} - \rho\left(\frac{\varphi(m_i)}{\log m_i} \right) \log m_i  \\
\ge C (\varphi(m_{i+1}) - \varphi(m_i) ) := \Delta_i.   
\end{multline}
Choose
$$\tilde \ell_i = {m_i}^{\rho(\varphi(m_i)/\log m_i)} \varphi(m_i) \log m_i \ \text{ and } \  \ell_i = \lfloor \tilde \ell_i \rfloor.     
$$
Then we get 
\begin{equation*}
\begin{split}
\tilde \ell_{i+1} - \tilde \ell_i 
&\ge m_i^{\rho(\varphi(m_i)/\log m_i)} \left( e^{\Delta_i}\varphi(m_{i+1}) \log m_{i+1} - \varphi(m_i) \log m_i \right) \\
&\ge m_i \varphi(m_i)  \left( e^{\Delta_i} \log m_{i+1} - \log m_i \right) \\ 
&\ge m_i \varphi(m_i) \big( (1 + \Delta_i ) \log m_{i+1} - \log m_i \big) \\
&= m_i \varphi(m_i) \left( \Delta_i \log m_{i+1} + (\log m_{i+1} - \log m_i)  \right),
\end{split}
\end{equation*}   
where the first and the second inequalities are respectively from \eqref{deltai} and $\rho(\varphi(m_i)/\log m_i)\geq 1$, and the third holds since $e^x \ge 1+x$. 
By Lemma~\ref{subseq2} (ii),
\begin{equation*}
\tilde \ell_{i+1} - \tilde \ell_i  \ge m_i \varphi(m_i) 
\min \left(  C \log m_{i+1}, \ \log \log m_i \right).
\end{equation*}      
Thus, for $i$ large enough that $m_i \ge 4$, $\varphi(m_i) \ge 2$, $ C \log m_{i+1} \ge 1$ and $\log \log m_i \ge 1$ we get  
$$ \ell_{i+1} - \ell_i > \tilde \ell_{i+1} - \tilde \ell_i - 1 \ge  2 m_i -1
\ge m_i + 3.  $$    

It follows from Lemma~\ref{subseq2} (ii) that  
\begin{equation*}
\varphi(m_{i+1}) \log m_{i+1} - \varphi(m_i) \log m_i \ge \min \left( \varphi(m_i) \log \log m_i , \  \log m_i \right),  
\end{equation*}   
thus  for large $i \ge i_0$ with $\log m_{i_0} > \varphi(m_1) \log \log m_{i_0}$ we have
\begin{align*}
\varphi(m_{i}) \log m_{i} &\ge \varphi(m_{i_0}) \log m_{i_0} + \sum_{j=i_0}^{i-1} \min \left( \varphi(m_1) \log \log m_i , \  \log m_i \right)  \\
&\ge \varphi(m_{i_0}) \log m_{i_0} + \varphi(m_1)  \sum_{j=i_0}^{i-1} \log \log m_j,
\end{align*}
which implies that
\begin{align*}
\frac{\tilde \ell_i}{i m_i} &= \frac{ {m_i}^{\rho(\varphi(m_i)/\log m_i)} \varphi(m_i) \log m_i}{i m_i}
\ge \frac{\varphi(m_i) \log m_i}{i} \\
&> \frac{\varphi(m_1)}{i} \sum_{j=i_0}^{i-1} \log \log m_j  
\ge \frac{\varphi(m_1)}{i} \sum_{j=i_0}^{i-1} \log \log j \\
&\ge \frac{\varphi(m_1)}{i} \int_{i_0}^{i-1} \log \log x \, \mathrm{d}x  
\to \infty
\end{align*}
as $i \to \infty$.     
Hence, $\{m_i\}$ and  $\{ \ell_i \}$ satisfy the condition of Lemma~\ref{Cantorsetlemma}.

By the choice of $\ell_i$ we have   
\begin{equation*}
\rho\left(\frac{\varphi(m_i)}{\log m_i} \right) \frac{\log m_{i}}{\varphi(m_{i})} \le
\frac{\log \ell_i}{\varphi(m_{i})} 
\le \rho\left(\frac{\varphi(m_i)}{\log m_i} \right) \frac{\log m_{i}}{\varphi(m_{i})} + \frac{\log (\varphi(m_i) \log m_{i})}{\varphi(m_{i})}   
\end{equation*}
Using the fact that  
$$ \frac{\rho(x)}{x} = C + \frac {D}{x} 
$$
 is monotone decreasing for $\delta \le x \le \gamma$, we deduce that   
$$
\alpha = \frac{\rho(\gamma)}{\gamma} \le \rho\left(\frac{\varphi(m_{i})}{\log m_{i}} \right) \frac{\log m_{i}}{\varphi(m_{i})} \le \frac{\rho(\delta)}{\delta}= \beta.   
$$
It fowllows that  
\begin{equation*}
\alpha \le \frac{\log \ell_i}{\varphi(m_{i})}\le \beta+ \frac{\log \varphi(m_i) + \log \log m_{i}}{\varphi(m_{i})}   
\end{equation*}

If $x \in A(\{m_i\}, \{ \ell_i \})$, then for large $n$ satisfying $m_{i-1} < n \le m_{i}$, $R_n (x) = \ell_i$, thus   
$$
\frac{\log \ell_i}{\varphi(m_{i})} \le \frac{\log R_{n}(x)}{\varphi(n)} \le \frac{\log \ell_i}{\varphi(m_{i-1}+1)}
=  \frac{\log \ell_i}{\varphi(m_i)} \frac{\varphi(m_i)}{\varphi(m_{i-1}+1)},
$$   
which implies that  
$$
\alpha \le \frac{\log R_{n}(x)}{\varphi(n)}
\le \left( \beta+ \frac{\log \varphi(m_i) + \log \log m_{i}}{\varphi(m_{i})} \right) \frac{\varphi(m_i)}{\varphi(m_{i-1}+1)}.
$$   
It follows from the fact that $\varphi(n) \ge (\delta /2) \log n$ for sufficiently large $n$ and $\log x / x \to 0$ as $x \to \infty$,   
$$ 
\lim_{i \to \infty} \frac{\log \varphi(m_i)}{\varphi(m_i)} = 0, \quad
\lim_{i \to \infty} \frac{\log \log m_{i}}{\varphi(m_i)} = 0.
$$   
Therefore, using Lemma~\ref{subseq2}  (ii) we get     
$$
\limsup_{n \to \infty} \frac{\log R_{n}(x)}{\varphi(n)} 
\le \beta \cdot  \lim_{i \to \infty} \frac{\varphi(m_i)}{\varphi(m_{i-1}+1)} = \beta.
$$


Since the sequence $\{m_i\}$ satisfies Lemma \ref{subseq2} (ii)   
and $\varphi$ is increasing, 
\begin{align*}
\limsup_{n \to \infty} \frac{\varphi(n)}{\log n} 
&= \limsup_{i \to \infty} \left( \max_{m_{i-1} +1 \le n \le m_i} \frac{\varphi(n)}{\log n} \right) 
\le \limsup_{i \to \infty} \frac{\varphi(m_i)}{\log m_{i-1}}  \\
&= \limsup_{i \to \infty} \frac{\varphi(m_i)}{\log m_i} \frac{\log m_i}{\log m_{i-1}}
= \limsup_{i \to \infty} \frac{\varphi(m_i)}{\log m_i} \cdot \lim_{i \to \infty} \frac{\log m_i}{\log m_{i-1}} \\
&= \limsup_{i \to \infty} \frac{\varphi(m_i)}{\log m_i} \le \limsup_{n \to \infty} \frac{\varphi(n)}{\log n},
\end{align*}    
which implies that  
$$
\limsup_{i \to \infty} \frac{\varphi(m_i)}{\log m_i} = \limsup_{n \to \infty} \frac{\varphi(n)}{\log n}=  \gamma.
$$
Similarly, we get   
$$
\liminf_{i \to \infty} \frac{\varphi(m_i)}{\log m_i} = \liminf_{n \to \infty} \frac{\varphi(n)}{\log n}=  \delta.   
$$

Choose a subsequence $\{m_{i_k}\}$ of $\{m_i\}$ such that $\lim\limits_{k\to\infty}\frac{\varphi(m_{i_k})}{\log m_{i_k}}=\gamma$.
By the continuity of $\rho$, then
$$\lim_{k\to\infty}\frac{\log R_{m_{i_k}} (x)}{\varphi(m_{i_k})} 
=\lim_{k\to\infty}\frac{\log \ell_{i_k}}{\log m_{i_k}}\frac{\log m_{i_k}}{\varphi(m_{i_k})}
=\frac{1}{\gamma}\lim_{k\to\infty}\rho\left(\frac{\varphi(m_{i_k})}{\log m_{i_k}}\right)
=\alpha.$$   
Hence, we get $\liminf\limits_{n \to \infty} \frac{\log R_n(x)}{\varphi(n)} = \alpha$ 
and $\limsup\limits_{n \to \infty} \frac{\log R_n(x)}{\varphi(n)} = \beta$ in a similar way.  
Thus $x\in E_{\alpha,\beta}^\varphi$.  
We have established that  
$A(\{m_i\}, \{ \ell_i \}) \subset E_{\alpha,\beta}^\varphi$. 

Therefore, we deduce that  $$A(\{n_i\}, \{ \ell_i \}) \subset E_{\alpha,\beta}^\varphi$$
 for all cases, which implies that 
$$\dim_{\rm H}E_{\alpha,\beta}^\varphi=1 .$$ 
by  Lemma~\ref{Cantorsetlemma}.
\end{proof}


\begin{thebibliography}{99}
\bibitem{BanLi2014} J.-C. Ban and B. Li, \textit{The multifractal spectra for the recurrence rates of beta-transformations}, J. Math. Anal. Appl. 420 (2014), 1662--1679.

\bibitem{BS1} L. Barreira and B. Saussol, \textit{Hausdorff dimension of measures via Poincare recurrence}, Comm. Math. Phys. 219 (2001), 443--463.

\bibitem{Bo} M. Boshernitzan, \textit{Quantitative recurrence results}, Invent. Math. 113 (1993), 617--631.

\bibitem{CWY15} H.-B. Chen, Z.-X. Wen and M. Yu,  \textit{The multifractal spectra of certain planar recurrence sets in the continued fraction dynamical system}, J. Math. Anal. Appl. 422 (2015), no. 2, 1264--1276.


\bibitem{Fal90} K. J. Falconer, \textit{Fractal geometry: Mathematical
foundations and applications}, John Wiley $\&$ Sons, Ltd., Chichester, 1990.

\bibitem{FW} D. J. Feng and J. Wu, \textit{The Hausdorff dimension of
recurrent sets in symbolic spaces}, Nonlinearity. 14 (2001), 81--85.

\bibitem{Fu} H. Furstenberg, \textit{Recurrence in Ergodic Theory and Combinatorial Number Theory}, Princeton University Press, Princeton, New Jersey, 1981.

\bibitem{Galatolo07} S. Galatolo,  \textit{Dimension and hitting time in rapidly mixing systems}, Math. Res. Lett. 14 (2007), no. 5, 797--805.

\bibitem{GKP06} S. Galatolo, D. H. Kim and K. K. Park, \textit{The recurrence time for ergodic systems with infinite invariant measures}, Nonlinearity 19 (2006), no. 11, 2567--2580.

\bibitem{GPM10} S. Galatolo and M. J. Pacifico, \textit{Lorenz-like flows: exponential decay of correlations for the Poincaré map, logarithm law, quantitative recurrence}, Ergodic Theory Dynam. Systems 30 (2010), no. 6, 1703--1737.

\bibitem{HV95} R. Hill and S. Velani, \textit{The ergodic theory of shrinking targets}, Invent. Math. 119 (1995) 175--198.

\bibitem{LS} K. S. Lau and L. Shu, \textit{The spectrum of Poincare
recurrence}, Ergod. Th. Dynam. Sys. 28 (2008), 1917--1943.

\bibitem{Mattila} P. Mattila, \textit{Geometry of sets and measures in
Euclidean spaces, Fractals and rectifiability}, Cambridge Studies in
Advanced Mathematics, 44. Cambridge University Press, Cambridge, 1995.

\bibitem{Olsen} L. Olsen, \textit{First return times: multifractal spectra
and divergence points}, Discrete Contin. Dyn. Syst. 10 (2004), no. 3, 635--656.

\bibitem{Olsen04} L. Olsen, \textit{Applications of multifractal divergence points to sets of numbers defined by their $N$-adic expansion}, Math. Proc. Cambridge Philos. Soc. 136 (2004), no. 1, 139-165.


\bibitem{OW} D. Ornstein and B. Weiss, \textit{Entropy and data compression
schemes}, IEEE Trans. Inform. Theory 39 (1993), 78--83.

\bibitem{Peng} L. Peng, \textit{Dimension of sets of sequences defined in terms of recurrence of their prefixes}, C. R. Math. Acad. Sci. Paris 343 (2006), no. 2, 129--133.

\bibitem{PTW12} L. Peng, B. Tan and B.-W. Wang, 
\textit{Quantitative Poincar\'{e} recurrence in continued fraction dynamical system},
Sci. China Math. 55 (2012), no. 1, 131--140. 

\bibitem{Rousseau12} J. Rousseau, \textit{Recurrence rates for observations of flows}, Ergodic Theory Dynam. Systems 32 (2012), no. 5, 1727--1751. 

\bibitem{RousseauSaussol10} J. Rousseau and B. Saussol, \textit{Poincar\'{e} recurrence for observations}, Trans. Amer. Math. Soc. 362 (2010), no. 11, 5845--5859. 


\bibitem{Saussol06} B. Saussol, \textit{Recurrence rate in rapidly mixing dynamical system}, Discrete. Contin. Dyn. Sys. ser.A 15 (2006), 259--267.

\bibitem{SW} B. Saussol and J. Wu, \textit{Recurrence spectrum in smooth
dynamical system}, Nonlinearity. 16 (2003), 1991--2001.

\bibitem{TW11} B. Tan and B.-W. Wang, \textit{Quantitative recurrence properties for beta-dynamical system}, Adv. Math. 228 (2011), no. 4, 2071--2097.

\end{thebibliography}
\end{document}